% see end \documentclass{birkjour}
\documentclass{amsart}

\newtheorem{theorem}{Theorem}[section]

\newtheorem{proposition}[theorem]{Proposition}

\usepackage{mathrsfs}

\theoremstyle{definition}

\newtheorem{example}[theorem]{Example}

\theoremstyle{remark}
\newtheorem{remark}[theorem]{Remark}

\numberwithin{equation}{section}

\newcommand{\I}{{\mathds {1}}}
\newcommand{\Rdb}{{\mathbb R}}
\newcommand{\Cdb}{{\mathbb C}}
\newcommand{\Tdb}{{\mathbb T}}
\newcommand{\Ddb}{{\mathbb D}}
%    Absolute value notation

%    Blank box placeholder for figures (to avoid requiring any
%    particular graphics capabilities for printing this document).

\begin{document}

\title{The Hoffman-Rossi theorem for operator algebras}

\author{David P. Blecher}
\address{Department of Mathematics \\ University of Houston \\  Houston,  TX
77204-3008, USA}
\email[David Blecher]{dblecher@math.uh.edu}
\author{Luis C. Flores}
\address{Department of Mathematics \\ University of Missouri \\ Columbia, 
MO 765212, USA}
\email[Luis Flores]{lcfvd6@mail.missouri.edu}

\author{Beate G. Zimmer}
\address{Department of Mathematics and Statistics \\ Texas A $\&$ M University--Corpus Christi \\ 
Corpus Christi, TX 78412-5825, USA}
\email[Beate Zimmer]{beate.zimmer@tamucc.edu}

%\subjclass{}
\subjclass{Primary  47A64, 47L10, 47L30, 47B44;
 Secondary   15A24, 15A60, 47A12, 47A60, 47A63, 49M15, 65F30}  
\keywords{Operator algebra, noncommutative function theory, extension of linear map, injective von Neumann algebra, conditional expectation}
 \date{\today}
\thanks{DB is supported by a Simons Foundation Collaboration Grant.}
\begin{abstract}   We study possible noncommutative (operator algebra)
 variants of the classical Hoffman-Rossi theorem from the theory of
function algebras.   In particular we give a condition on the range of a contractive weak* continuous homomorphism
defined on an operator algebra $A$, 
which is necessary and sufficient (in the setting we explain) for a positive weak* continuous extension to any von Neumann algebra containing $A$.
\end{abstract}

\maketitle

\section{Introduction}

The Hoffman-Rossi theorem that we are interested in here
is a remarkable result from the theory of
function algebras \cite[Theorem 3.2]{HR}.  It states 
 that if $A$ is a weak* closed unital subalgebra of $M = L^\infty(\mu)$, for a probability measure $\mu$, 
and if $\varphi$ is a weak* continuous character (i.e.\ nontrivial complex-valued homomorphism)
on $A$, then  $\varphi$ has a weak* continuous positive
linear extension to $M$.   Since unital linear functionals on $M$ (or on any $C^*$-algebra) are states (that is,
contractive and unital linear functionals) 
if and only if they are positive,
this is saying that weak* continuous characters on $A$
have weak* continuous Hahn-Banach extensions  to $L^\infty(\mu)$. 
Or, in the language of von Neumann algebras (see e.g.\ p.\ 245, 248--249 in \cite{Bla}),
weak* continuous characters
on $A$ have {\em normal} state extensions to $L^\infty(\mu)$.   In the present paper we study
 possible noncommutative (operator algebra) variants of this result.     An operator algebra
is a unital algebra of operators on a Hilbert space, or more abstractly a Banach algebra isometrically isomorphic to  such an algebra of Hilbert space operators.   (Sometimes one wants to consider 
an operator space structure on an operator algebra, and replace the word `isometrically' by 
`completely isometrically' in the last sentence (see e.g.\
\cite{BLM} for definitions), but this will not be important in the present paper.)  Our main result is  a condition on the range of a contractive (or completely contractive) weak* continuous homomorphism $\Phi$ 
defined on a unital operator algebra $A$, 
which is necessary and sufficient  in the setting explained below, for a positive weak* continuous extension
of  $\Phi$  to any von Neumann algebra containing $A$
as a weak* closed subalgebra.

  \section{A von Neumann algebra
valued Hoffman-Rossi theorem}
 
There are several natural ways to try to generalize the Hoffman-Rossi theorem to the 
operator algebra setting.   
First however we note that the setting of algebras and algebra homomorphisms
is crucial.
Even in the classical setting, one cannot hope that 
weak* continuous states on operator systems have weak* continuous Hahn-Banach extensions
(that is, normal state extensions).  The following, which we will use  later,
  is a convincing illustration
of this.

\begin{example} \label{ex1}  Consider the state $\varphi_1$ of evaluation at $1$ on the set 
${\mathcal S}$ of polynomials of degree $\leq 1$
on $[0,1]$, viewed as a subspace of $L^\infty([0,1])$. This
is weak* continuous (since ${\mathcal S}$ is finite dimensional).  
Any normal state extension to $L^\infty([0,1])$ of $\varphi_1$
is integration against a positive
$g \in L^1([0,1])$ with $\int_{[0,1]} \, g \, dt= 1$.   Applying this state
to the monomial $t$ gives $\int_{[0,1]} \, t g(t) \, dt = \varphi_1(t) = 1$.
Hence $\int_{[0,1]} \, (1-t) \, g(t) \, dt= 0$, forcing the contradiction
$g = 0$ a.e..   \end{example}

 With a little more work one can find a weak* continuous state on a 
unital weak* closed subalgebra of a von Neumann algebra $M$, which has no
normal state extension to $M$ (for example  the algebra in the proof of Proposition \ref{exa} below).

In the  noncommutative (operator algebra) setting we suppose that we have 
a weak* closed unital subalgebra $A$ of 
a von Neumann algebra $M$, and a weak* continuous unital 
contractive homomorphism  $\Phi : A \to D$, for a von Neumann algebra $D$.  
By the first (resp.\ second) `unital' here we mean that $1 \in A$, where $1$ is the identity of $M$
(resp.\ $\Phi(1) = 1$).
The question we are interested in is when   does $\Phi$ have a weak* continuous 
contractive (or equivalently, positive) 
$D$-valued linear extension to $M$? 
Sometimes we will add the adjective `completely', for example
consider UCP (unital completely positive) extensions--see e.g.\
\cite[Chapter 1]{BLM} for notation.    To obtain such a
Hoffman-Rossi theorem  one has to have restrictions on the algebras
or on $\Phi$ or its range of $\Phi$, as we will see below.  It is not true
in general, for example, when $D = B(H)$ for a Hilbert space $H$
(unless $A$ is also selfadjoint, in which case it may be proved using
as one ingredient the well
known extendibility of normal representations of von Neumann algebras).   

In the case of the original 
Hoffman-Rossi theorem we may identify the range of the character with $D = \Cdb 1_A = \Cdb 1_M$, 
in which case the homomorphism we are extending is an idempotent map 
on $A$, and is a $D$-bimodule map.
Thus we will usually restrict our attention 
to the setting  of a weak* continuous unital 
contractive  or completely contractive homomorphism $\Phi : A \to D \subset A$ 
which is a $D$-bimodule map (or equivalently, is the identity map on $D$).  Such maps $\Phi$ are called
$D$-{\em characters} in a sequel  paper \cite{BLvv}, where their theory is developed much more extensively.
We remark that by
4.2.9 in \cite{BLM}, any unital completely contractive projection of a unital operator algebra onto a subalgebra
$D$ is a  $D$-bimodule map. 
We then 
ask for a positive normal extension from $M$ into $D$.  Note that the latter implies that $D$ is a von Neumann algebra, and $\Psi$ is
a conditional expectation onto $D$ (see p.\ 132--133 in \cite{Bla} for the main facts about conditional expectations and their
relation to bimodule maps and projections maps of norm $1$).
However $D$ being a von Neumann algebra is not enough for a weak* continuous positive extension from $M$  into $D$, even if $D$ is also injective
and commutative.   
For example,  Takesaki \cite{Tak} showed that there need not exist a positive weak* continuous extension from $M$ onto such $D$ if $M$ is not `finite'.  

The correct Hoffman-Rossi theorem in the setting described in the last paragraph that works 
for all von Neumann algebras $M$ 
requires  $D$ to be finite dimensional, or more generally, a (purely) atomic von Neumann 
algebra (see p.\ 354 in \cite{Bla}). 
This is Theorem \ref{HRvn}, and the result after it (Proposition \ref{niff}) 
shows the necessity of the atomic hypothesis.

First however we give a quick proof of  the case  of our main result in the scalar valued case.  This particular proof we noticed some
time after our paper was submitted (our original proof is now Remark  \ref{nr}).   It follows
a method which seems to be attributed by Hoffman and Rossi to
 Heinz K\"onig and Don Sarason below Theorem 3.3 in \cite{HR}.  These
authors of course addressed the commutative case.  
A variant of this argument may be found in
a (much later) method of Cassier to prove a similar result for characters on
singly generated commutative dual  operator algebras
\cite{Cas}.  These ideas are somewhat buried in \cite{HR} and \cite{Cas}.  Indeed  
the proof of the Hoffman-Rossi result (stated
in our very first paragraph) which 
is officially presented in \cite{HR} and which 
found its way into the function algebra texts (such as \cite{Gam})
is several orders of magnitude
more complicated. 
We combine and modify these  ideas  hidden in \cite{HR,Cas}, 
so as to include both the noncommutative case and (a generalization of)
 the original Hoffman-Rossi theorem.  Of course the statement in the 
commutative case is formally 
contained in the noncommutative result.
We have chosen to write it as follows to make the importance
 of K\"onig and Sarason's contribution clearer.   
In the  description of their contribution below Theorem 3.3 in \cite{HR}, $d \mu = r \, dm$ in our notation below; 
 $m$ is a positive measure such that $L^1(m)^* = L^\infty(m)$.

\begin{theorem} \label{HRks} {\rm (Hoffman-Rossi, the scalar valued case)} \  
Let $M = B(H)$ for a Hilbert space $H$
or let $M= L^\infty(m)$ be a commutative von Neumann algebra.
Suppose that $\varphi : A \to \Cdb$ is a weak* continuous character on a weak*
closed subalgebra $A$ of $M$ containing the unit of $M$.  Then
$\varphi$ has a normal state extension to $M$.
\end{theorem}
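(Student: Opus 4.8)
The plan is to produce a normal \emph{positive} functional on $M$ restricting to $\varphi$ on $A$; since $1\in A$, such a functional automatically sends $1$ to $\varphi(1)=1$ and is therefore a normal state. I work throughout in the predual duality between $M_*$ and $M$, writing $A_\perp=\{f\in M_*: f|_A=0\}$ and $M_{*,+}$ for the cone of positive normal functionals. Because $\varphi$ is weak* continuous, it is an element of the predual $A_*=M_*/A_\perp$, so the restriction map $M_*\to A_*$ carries some (generally non-positive) $\omega_0\in M_*$ to $\varphi$; the set of all normal extensions is then the closed affine set $\omega_0+A_\perp$. Thus the entire problem is to show this affine set meets $M_{*,+}$, i.e.\ that $\omega_0\in M_{*,+}+A_\perp$.

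First I would settle the \emph{approximate} version of the containment $\omega_0\in M_{*,+}+A_\perp$, which needs only that $\varphi$ is unital and contractive. The convex set $M_{*,+}+A_\perp$ has polar (in $M$) equal to $\{a\in A:\ a+a^*\le 0\}$: the subspace $A_\perp$ forces the polar into $(A_\perp)^\perp=A$ (here $A=(A_\perp)^\perp$ since $A$ is weak* closed), while the cone $M_{*,+}$ forces $\mathrm{Re}\,g(a)\le 0$ for every $g\in M_{*,+}$, i.e.\ $a+a^*\le 0$. Now if $a\in A$ with $a+a^*\le 0$, extend $\varphi$ by Hahn--Banach to a unital contraction on the $C^*$-algebra $M$, hence to a state; states are positive, so $\mathrm{Re}\,\varphi(a)\le 0$. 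Therefore $\mathrm{Re}\,\omega_0\le 0$ on this polar set, and the bipolar theorem places $\omega_0\in\overline{M_{*,+}+A_\perp}$. Unwinding, there are normal states $g_n$ on $M$ with $g_n|_A\to\varphi$ in $A^*$.

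The crux --- and the only place the \emph{character} hypothesis is used --- is to upgrade this to an exact extension, that is, to show that $M_{*,+}+A_\perp$ is already norm closed, equivalently that a weak* limit state $g$ of the $g_n$ on the bidual loses no mass to a singular part when restricted to $A$. I would decompose $g=g_{\mathrm{nor}}+g_{\mathrm{sing}}$ into its normal and singular parts and aim to prove $g_{\mathrm{sing}}|_A=0$, whence $g_{\mathrm{nor}}$ (which then has $g_{\mathrm{nor}}(1)=1$) is the desired normal state extension. This is precisely the step that must fail for the weak* continuous but non-multiplicative state of Example~\ref{ex1}, so multiplicativity is indispensable here. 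In the commutative case $M=L^\infty(m)$ this is the K\"onig--Sarason assertion that the representing measure of $\varphi$ is absolutely continuous, $d\mu=r\,dm$ with $r\ge 0$: one feeds the identities $\varphi(a^n)=\varphi(a)^n$ (equivalently $\varphi(e^{a})=e^{\varphi(a)}$, valid since the weak* closed $A$ is stable under these power series) into an F.\ and M.\ Riesz / Forelli-type argument to annihilate the singular part. In the case $M=B(H)$ I would instead exploit that a weak* continuous character should be carried by a unit vector --- as the normalized reproducing kernel carries point evaluation on $H^\infty$ --- and exhibit the rank-one density operator directly; multiplicativity is what should force such an eigen-type vector to exist.

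I expect the second step (eliminating the singular part while preserving the values on $A$) to be the genuine obstacle: the first step is soft and, as Example~\ref{ex1} shows, available for every weak* continuous state, so all of the real content lies in turning the approximating normal states $g_n$ into a single normal extension --- precisely where the restriction to the two special classes of $M$, together with the multiplicativity of $\varphi$, does the work.
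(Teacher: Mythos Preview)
Your first step is correct and cleanly executed: the bipolar computation and the Hahn--Banach argument placing $\omega_0\in\overline{M_{*,+}+A_\perp}$ are fine, and you are right that this uses only that $\varphi$ is unital and contractive. The gap is that your second step is not a proof but a wish. In the commutative case, invoking an ``F.\ and M.\ Riesz / Forelli-type argument'' is essentially circular: the abstract F.\ and M.\ Riesz phenomenon for a general weak* closed subalgebra of $L^\infty(m)$ is of the same depth as the Hoffman--Rossi theorem itself, and the identities $\varphi(a^n)=\varphi(a)^n$ do not by themselves annihilate a singular part without substantial additional machinery you have not supplied. In the $B(H)$ case your suggestion that ``a weak* continuous character should be carried by a unit vector'' is a restatement of the desired conclusion (that is precisely what a normal state extension with rank-one density would mean), and you give no mechanism for producing that vector. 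You yourself flag this step as ``the genuine obstacle,'' and it is: everything preceding it is soft, and the hard content is missing.

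The paper's proof takes a completely different route that avoids the normal/singular decomposition altogether. It works in the $L^2$ space attached to $M$ (Hilbert--Schmidt operators, or $L^2(m)$), factors a predual representative of $\varphi$ as $r=ab$ with $a,b\in L^2$, and considers the closures $E=\overline{Aa}$ and $F=\overline{Ja}$ in $L^2$, where $J=\ker\varphi$. A short Cauchy--Schwarz estimate shows $a\notin F$, so $E\ominus F\neq 0$. Multiplicativity enters exactly once, via the fact that $J$ is a (two-sided) \emph{ideal} in $A$: this gives $JE\subset F$, so for any unit vector $h\in E\ominus F$ one has $\langle Jh,h\rangle=0$, and ${\rm tr}(\,\cdot\,|h^*|^2)$ is a normal state annihilating $J$, hence agreeing with $\varphi$ on $A=J+\Cdb 1$. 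The normal extension is thus \emph{constructed} directly as an $L^2$ vector state rather than extracted from an approximating sequence, which is what makes the argument short and uniform across the two cases.
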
 \begin{proof}  To combine the classical and
noncommutative case we write $L^\infty$ for $B(H)$ or $L^\infty(m)$,
$L^1$ for the predual of $L^\infty$, and tr for the trace on $B(H)$ or for the
integral on $L^\infty(m)$ (classical case).
We also write $L^2$ for the Hilbert space with inner product tr$(b^* a)$
(this will be the Hilbert-Schmidt operators in the $B(H)$ case or $L^2(m)$
in the commutative case).  By Banach space
duality there exists $r \in L^1$ such that tr$(xr) = \varphi(x)$ for $x \in A$.
Write $r = ab$ for $a, b \in L^2$.  Let $J = {\rm Ker} \, \varphi$, and
let $E$ (resp.\ $F$) be the
closure in the $L^2$  norm of $Aa$ (resp.\ $Ja$).
 For $f \in J$ we have
$$\| a - f a \|_2^2 = {\rm tr} \, (a^* |1-f|^2 a) = {\rm tr} \, (|(1-f)a|^2) 
\geq \frac{1}{C^2},$$ for a constant $C > 0$, since
 $$1 = \varphi(1-f) = {\rm tr} \, ((1-f)r) =
{\rm tr} \, (b (1-f)a) \leq C \, {\rm tr} \, (|(1-f)a|^2)^{\frac{1}{2}} .$$ 
  It follows that $a \notin F$, so $E \neq F$.  Choosing a
unit vector $h \in E \ominus F$, we have
 $${\rm tr} \, (f |h^*|^2) = \langle fh , h \rangle = 0 ,
\qquad f \in J,$$ since $fh \in J E \subset F$.
Since $A = J + \Cdb 1$ it follows that tr $(\cdot |h^*|^2)$ is a normal state on $M$
extending $\varphi$.
\end{proof}

%We give a

\begin{remark} \label{nr}  An alternative quick 
(and more noncommutative) proof which also relies on the  $A = J
+ {\mathbb C} 1$ relation:   Suppose that $M$ acts on  a Hilbert space $K$, so that $A \subset B(K)$. Consider the amplification $\pi(x) = x^{(\infty)}$
of the identity
representation  of $M$,
acting on the countably infinite direct sum $K^{(\infty)}$ of copies of $K$.
Then $\pi(J)$ is reflexive by e.g.\ \cite[Appendix A.1.5]{BLM}, and by definition
of reflexive there, since
$I \notin \pi(J)$, there is a vector $\xi \in K^{(\infty)}$ with
$\xi \notin [\pi(J) \xi]$.   Hence $[\pi(A) \xi] \ominus [\pi(J) \xi] \neq (0)$.
Choose $\eta \in [\pi(A) \xi] \ominus [\pi(J) \xi]$ of norm $1$.
Since $\pi(J) \eta \in \pi(J) [\pi(A) \xi] \subset [\pi(J) \xi]$ it follows that
$\langle \pi(\cdot) \eta , \eta \rangle$ 
is a normal state on $M$ annihilating $J$.   Hence its restriction to $A$
is a multiple of $\Phi$, indeed equals $\varphi$ since both are states. 
\end{remark}
 
\begin{theorem} \label{HRvn} Consider the inclusions $D \subset A \subset M,$ where $M$ is a von Neumann algebra, $A$ is a
%z
weak* closed subalgebra of $M$, and $D$ is an atomic von Neumann subalgebra containing the unit of $M$. If $\Phi : A \to D$ 
is a unital weak* continuous homomorphism which is also a $D$-bimodule map, then $\Phi$ extends to a normal
conditional expectation $\Psi : M \to D$.
\end{theorem}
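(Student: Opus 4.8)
The plan is to exploit the structure theory of the atomic von Neumann algebra $D$ and reduce the construction of $\Psi$ to the scalar-valued Hoffman--Rossi theorem, applied not to $M$ itself but to corners of $M$. Write $D = \bigoplus_\lambda B(H_\lambda)$ as a direct sum of type I factors, set $D_\lambda := B(H_\lambda) = z_\lambda D z_\lambda$ where $z_\lambda$ is the central projection of $D$ which is the identity of the $\lambda$-th summand, and for each $\lambda$ fix an orthonormal basis of $H_\lambda$ to obtain a self-adjoint system of matrix units $\{e^\lambda_{ij}\} \subset D_\lambda$ with $\sum_i e^\lambda_{ii} = z_\lambda$. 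The minimal projections $e := e^\lambda_{11}$ are the key device: since $e \in D \subseteq A \subseteq M$, the corner $eMe$ is a von Neumann algebra with identity $e$, and $eAe = A \cap eMe$ is a weak* closed subalgebra of it containing its unit.

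First I would compress $\Phi$ to such a corner. For $y \in eAe$ we have $\Phi(y) = \Phi(eye) = e\Phi(y)e \in eDe = \Cdb e$, so the restriction $\varphi_e := \Phi|_{eAe}$ is a unital weak* continuous character on $eAe$ with values in $\Cdb e \cong \Cdb$. Applying the scalar-valued Hoffman--Rossi theorem to the pair $eAe \subseteq eMe$---in the form valid for an arbitrary von Neumann algebra, as established by the amplification argument of Remark \ref{nr}---produces a normal state $\omega_\lambda$ on $eMe = e^\lambda_{11} M e^\lambda_{11}$ extending $\varphi_e$.

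Next I would reassemble these corner states into a $D$-valued map by setting, for $x \in M$,
$$\Psi_\lambda(x) = \sum_{i,j} \omega_\lambda\!\left(e^\lambda_{1i}\, x\, e^\lambda_{j1}\right) e^\lambda_{ij}, \qquad \Psi(x) = \sum_\lambda \Psi_\lambda(x),$$
where the summands lie in the orthogonal corners $z_\lambda D z_\lambda$, so that $\|\Psi(x)\| = \sup_\lambda \|\Psi_\lambda(x)\|$. A routine matrix-unit computation gives $\Psi_\lambda(1) = z_\lambda$ and shows that $\Psi_\lambda$ is a $D_\lambda$-bimodule map. Positivity is the next clean point: for finitely many scalars $c_i$ one has $\sum_{i,j}\bar c_i c_j\, \omega_\lambda(e^\lambda_{1i} x^* x\, e^\lambda_{j1}) = \omega_\lambda(y^*y) \geq 0$ with $y = \sum_j c_j\, x\, e^\lambda_{j1} \in M e^\lambda_{11}$, so the scalar matrix $[\omega_\lambda(e^\lambda_{1i}x^*x e^\lambda_{j1})]$ is positive semidefinite and $\Psi_\lambda(x^*x) \geq 0$ in $D_\lambda$. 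Thus $\Psi$ is a unital positive (hence contractive) projection onto $D$. That it extends $\Phi$ is a final matrix-unit computation: since $\Phi$ is a homomorphism fixing $D$, $\Phi(e^\lambda_{1i}\,a\,e^\lambda_{j1}) = e^\lambda_{1i}\Phi(a)e^\lambda_{j1}$ equals the $(i,j)$ entry of the $\lambda$-block of $\Phi(a)$ times $e^\lambda_{11}$, and because $\omega_\lambda$ extends $\varphi_e$ this is exactly $\omega_\lambda(e^\lambda_{1i}a e^\lambda_{j1})$; summing over $\lambda$ recovers $\Psi(a) = \Phi(a)$.

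I expect the main obstacle to be normality (weak* continuity) and genuine well-definedness of $\Psi$ when the $H_\lambda$ are infinite dimensional or there are infinitely many atoms, since then each $\Psi_\lambda$ is defined by an infinite series of matrix units. Each matrix coefficient $x \mapsto \omega_\lambda(e^\lambda_{1i} x e^\lambda_{j1})$ is automatically normal, being a normal state composed with a weak* continuous map; the work is to pass from normality of all coefficients of the uniformly bounded map $\Psi$ to normality of $\Psi$ itself and to control the convergence of the defining series, using that the normal functionals on $D$ are spanned by such coefficients together with the bound $\|\Psi\| \le 1$. Once $\Psi$ is known to be a normal unital contractive projection of $M$ onto the $C^*$-subalgebra $D$, Tomiyama's theorem (see p.\ 132--133 in \cite{Bla}) guarantees that it is automatically a completely positive conditional expectation onto $D$ extending $\Phi$, which completes the proof.
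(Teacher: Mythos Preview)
Your proof is correct and follows the same underlying strategy as the paper's: reduce to the scalar Hoffman--Rossi theorem via the matrix units of $D$. The packaging, however, is genuinely different. The paper treats three cases in sequence (scalar, $D$ a single type~I factor, general atomic $D$); in the factor case it embeds $M$ in $B(K)$, uses the spatial tensor decomposition $K \cong L \otimes \ell^2(I)$ to write $A = B \,\bar\otimes\, B(\ell^2(I))$ and $\Phi = \varphi \otimes I$ for a scalar character $\varphi$ on $B \subset B(L)$, and then sets $\Psi = \sigma \otimes I$ for a normal state extension $\sigma$ of $\varphi$ to $B(L)$. Normality and complete positivity of $\sigma \otimes I$ come for free from tensor-product formalism. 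You instead stay inside $M$, apply the scalar case (via Remark~\ref{nr}) to each corner $e_{11}^\lambda M e_{11}^\lambda$, and reassemble by the explicit matrix-coefficient formula $\Psi_\lambda(x) = \sum_{i,j} \omega_\lambda(e_{1i}^\lambda x e_{j1}^\lambda)\, e_{ij}^\lambda$, handling all $\lambda$ in a single pass rather than separating the factor and atomic reductions. Under the paper's identifications this is the same map (the matrix coefficients of $\sigma \otimes I$ are exactly your $\omega_\lambda(e_{1i}^\lambda \cdot e_{j1}^\lambda)$), but your route leaves as a flagged obstacle the boundedness and normality of $\Psi_\lambda$ when $\dim H_\lambda = \infty$. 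That step is standard: in the GNS representation $(\pi,\xi)$ of the normal extension $\tilde\omega_\lambda(\cdot) = \omega_\lambda(e\,\cdot\,e)$ the vectors $\xi_i := \pi(e_{i1}^\lambda)\xi$ are orthonormal, and $\Psi_\lambda = V^* \pi(\cdot) V$ for the isometry $V : \delta_i \mapsto \xi_i$, which exhibits $\Psi_\lambda$ as normal and UCP. So there is no real gap; the paper's tensor-product device simply absorbs this verification, while your intrinsic corner approach trades that convenience for avoiding the embedding $M \subset B(K)$ and the identification $A = B \,\bar\otimes\, B(\ell^2(I))$.
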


\begin{proof}
The case that $D = \Cdb 1$ follows immediately from Theorem \ref{HRks}.
 
Next suppose that $D$ is a type I factor, thus isomorphic to $B(l^2(I))$ for an index set $I$.  We may suppose that
$M = B(K)$, and that $D$ is the range of a normal unital
$*$-homomorphism $\pi : B(l^2(I)) \to B(K)$.  For $i \in I$ let $p_i = \pi(E_{ii})$, where $\{ E_{ik} \}$ are the matrix
units in $B(l^2(I))$,
and set $L = p_j K$ for a fixed $j \in I$.  By a
matrix unit argument we may suppose that (unitarily)
$K = L^{(I)} = L \otimes l^2(I)$, the Hilbert space sum of $I$ copies of $L$,
that $M = B(L) \bar{\otimes} B(l^2(I))$,
and $D = I_L \otimes B(l^2(I))$ and
$p_i = I_L \otimes E_{ii} \in D \subset A$.
Set $A_j =  p_j A p_j$ and 
$$B = \{ T \in B(L) : p_j (T \otimes I)  p_j \in A_j  \subset A \} .$$ Then by a routine matrix unit argument we have 
 $B  \cong B \otimes E_{jj} =  A_j$ and
$A = B \bar{\otimes} B(l^2(I))$ via the unitary above.   Here $B \bar{\otimes} B(l^2(I))$ is the {\em normal spatial tensor product} of e.g.\ 2.7.5 (2) in \cite{BLM}, namely the 
weak* closure of $B \otimes B(l^2(I))$ in the von Neumann algebra $B(L) \bar{\otimes} B(l^2(I)) \cong B(K)$.  
That $A = B \bar{\otimes} B(l^2(I))$  follows by a  matrix unit argument (exploiting the usual properties of
matrix units like $E_{ik} = E_{ij} E_{jk} = E_{ij} E_{jj}  E_{jk}$) and 
weak* density.     One
also uses the fact that $A$ contains, and is a submodule over, $D = I_L \otimes B(l^2(I))$, that $A_j =  B   \otimes E_{jj}$, and that the span of the $p_i A p_k$ is weak* dense in $A$.

For $i, k \in I$, $\Phi (p_i a p_k) = p_i \Phi (a) p_k \in \Cdb E_{ik}$.  Hence
$\Phi (b \otimes E_{ik})$, viewed as a matrix in
$B(l^2(I))$, is zero except perhaps for its $i$-$k$ entry,
for any $b \in B$.
Let $\varphi = \pi_{jj} \circ \Phi \circ \epsilon_{j}$, where
$\epsilon_j(b) = b \otimes E_{jj}$ and $\pi_{jj}$ is the state on $D$ that evaluates the $j$-$j$ entry.
Then $\varphi$ is a character of $B$.
Also, $\Phi = \varphi \otimes I$, indeed for $b \in B$ and $i, k \in I$ we have
$$\Phi (b \otimes E_{ik}) = E_{ij} \Phi (b \otimes E_{jj}) E_{jk} =  E_{ij} \varphi (b) E_{jk} =   ( \varphi \otimes I)(b \otimes E_{ik}).$$ 
 first paragraph, $\varphi$ extends to a normal state
 $\sigma$ on $B(L)$, so that $\Psi = \sigma \otimes I$ is a
normal UCP map extension of $\Phi$ to $M = B(L) \bar{\otimes} B(l^2(I))$.

Finally, suppose that $D$ is atomic, so $D \cong \oplus_{i \in I} \, B(H_i)$
for Hilbert spaces $H_i$.   Let $p_i$ be the central projection
in $D$ corresponding to the identity in $B(H_i)$.  
We have 
 that
 $\Phi (p_i a p_j) = p_i \Phi (a) p_j = 0$ for $i \neq j$ (since $p_i$ is central in $D$).
Thus $\Phi = \Phi \circ \Delta_{|M}$ where $\Delta$ is
the UCP map on $M$ defined by $\Delta(x) = \sum_i \, p_i x p_i$.
Let $\Phi_i = \Phi_{|p_i A p_i}$.   By the  last paragraph  $\Phi_i$
extends to a normal UCP map $\sigma_i : p_i M p_i \to p_i D p_i$.
We obtain a normal UCP map extension $\Psi = (\oplus_i \, \sigma_i) \circ \Delta$
of $\Phi$ to $M$.   By Tomiyama's 
well known theorem (see p.\ 132--133 in \cite{Bla})   
 on projections of norm $1$,
$\Psi$ is necessarily a $D$-bimodule map and conditional expectation. 
 \end{proof}

\begin{proposition} \label{niff}  Suppose that $A$ is a unital 
%z
weak* closed subalgebra of $M = B(H)$, and suppose that $D$ is a weak*
%z
 closed unital  selfadjoint subalgebra of $A$. 
Suppose that $\Phi : A \to D$ is a weak*
%z
 continuous unital homomorphism on $A$ that
is a $D$-bimodule map.    Suppose that  
  there exists a  normal positive map $\Psi : M \to D$ extending $\Phi$.   Then $D$ is an atomic von Neumann algebra. 
\end{proposition}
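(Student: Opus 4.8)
The plan is to first observe that the hypotheses force $\Psi$ to be a normal conditional expectation onto $D$, and then to argue by contradiction that its range cannot have a diffuse (continuous) part. Since $D \subseteq A$ and $\Phi$ is a unital $D$-bimodule map, $\Phi$ restricts to the identity on $D$, so $\Psi$ is a unital positive (hence contractive) normal projection of $M = B(H)$ onto $D$; by Tomiyama's theorem it is a normal conditional expectation, in particular a $D$-bimodule map, $\Psi(d_1 x d_2) = d_1 \Psi(x) d_2$ for $d_1, d_2 \in D$. Suppose, for contradiction, that $D$ is not atomic. Writing $z$ for the central projection of $D$ onto its continuous part, we have $z \neq 0$, and compressing by $z$ (note $z \in D \subseteq M$, that $zMz = B(zH)$, and that $x \mapsto z\Psi(x)z = \Psi(zxz)$ is a normal conditional expectation of $B(zH)$ onto $zDz = zD$) we may assume $D$ has no minimal projection. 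Compressing further by the support projection in $D$ of a vector state, we may also assume $D$ is $\sigma$-finite, so that $D$ carries a faithful normal state.

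The engine of the argument handles the case where $D$ additionally carries a faithful normal tracial state $\tau$ (for instance when $D$ is finite). Then $\psi := \tau \circ \Psi$ is a normal state on $B(H)$, so $\psi = {\rm tr}(\rho_0 \, \cdot\,)$ for a positive trace-class operator $\rho_0$ with ${\rm tr}(\rho_0) = 1$. For $d \in D$ and $x \in B(H)$, bimodularity and the trace property give $\psi(dx) = \tau(d\Psi(x)) = \tau(\Psi(x)d) = \psi(xd)$; hence ${\rm tr}((\rho_0 d - d \rho_0)x) = 0$ for every $x$, so $\rho_0$ commutes with $D$, i.e. $\rho_0 \in D'$. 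Thus $D'$ contains the nonzero compact operator $\rho_0$. But a diffuse $D$ admits no nonzero compact operator in its commutant: if $0 \neq K = K^* \in D'$ were compact, a nonzero eigenvalue would have a finite-dimensional eigenspace $V$, which is $D$-invariant because $K \in D'$; restriction to $V$ is then a nonzero unital normal $*$-homomorphism $D \to B(V)$ onto a finite-dimensional algebra, whose kernel is $D(1-z_0)$ for a nonzero central projection $z_0 \in D$, and any minimal projection of the finite-dimensional corner $Dz_0$ is a minimal projection of $D$, contradicting diffuseness. This settles the claim whenever $D$ has a faithful normal trace.

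It remains to reduce to the traced case. If $D$ is of type ${\rm II}_\infty$, compressing by a nonzero finite projection yields a type ${\rm II}_1$ corner, to which the engine applies. The main obstacle is the type ${\rm III}$ case, where no corner of $D$ carries a trace and the device above is unavailable. I would overcome this by modular theory: taking a faithful normal state $\phi_0$ on $D$ and setting $\phi := \phi_0 \circ \Psi$, one has $\phi \circ \Psi = \phi$, and (after compressing by the support of $\phi$, so that $\phi$ is faithful on $B(H)$) Takesaki's theorem shows that $D$ is globally invariant under the modular automorphism group $\sigma^\phi_t = {\rm Ad}(h^{it})$, where $h$ is the density operator of $\phi$. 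Since $h$ is trace-class, hence compact with finite-dimensional eigenspaces, the fixed-point algebra $\{h\}' = \bigoplus_n B(V_n)$ (the $V_n$ finite-dimensional) is a finite atomic algebra, and the discreteness of the modular spectrum then confines $D$ to an atomic structure, contradicting type ${\rm III}$. Equivalently, one may simply invoke the known characterization that the range of a normal conditional expectation from a type I factor is atomic. I expect the verification of this last, type ${\rm III}$, reduction to be the technical heart of the proof.
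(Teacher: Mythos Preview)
Your argument and the paper's share the same opening move: since $\Psi$ is unital, positive, normal, and restricts to the identity on $D$, it is a normal norm-$1$ projection of $B(H)$ onto $D$, hence (Tomiyama) a normal conditional expectation. At that point the paper simply cites \cite[Theorem IV.2.2.2]{Bla}: the range of a normal norm-$1$ projection on $B(H)$ is an atomic von Neumann algebra. That is the entire proof in the paper.

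What you do instead is attempt to reprove this cited fact by hand. Your ``engine''---pull back a faithful normal trace on $D$ along $\Psi$ to obtain a normal state ${\rm tr}(\rho_0\,\cdot\,)$ on $B(H)$, observe that $\rho_0 \in D'$ via the trace identity, and then argue that a diffuse $D$ forbids nonzero compacts in $D'$---is a clean and correct argument in the semifinite case. (Minor point: your case split should include type I$_\infty$ alongside II$_\infty$; the same ``compress by a nonzero finite projection'' step handles both.) For the type III case, however, your modular-theory sketch does not close. Invariance of $D$ under $\sigma_t^{\phi} = {\rm Ad}(\rho^{it})$ with $\rho$ compact tells you that the centralizer $D^{\phi}$ sits inside the atomic algebra $\{\rho\}'$, but type III algebras can certainly have type I (even trivial) centralizers for particular faithful states, and ``discreteness of the modular spectrum'' of this single state does not by itself contradict type III (periodic states on III$_\lambda$ factors already have pure-point modular spectrum). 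You implicitly concede this by offering, as an alternative, exactly the characterization the paper invokes. So your write-up is best read as the paper's one-line citation, preceded by an independent and worthwhile proof of the cited result in the semifinite case; the type III part still rests on that citation.
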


\begin{proof}  
Note that such a map $\Psi$ is necessarily a normal norm $1$ projection 
onto $D$.   However a von Neumann algebra which is the range of such a projection on $B(H)$ is
atomic (\cite[Theorem IV.2.2.2]{Bla}).  \end{proof}   

The following example shows the importance of the selfadjointness of $D$
in finding {\em any} positive $B(H)$-valued extension of a homomorphism.

\begin{proposition} \label{exa}    There exists  inclusions $D \subset A \subset M,$ where $M$ is a finite von Neumann algebra, and  $A$ and $D$ are 
commutative finite dimensional  weak*
%z
 closed subalgebras of $M$  containing the unit of $M$, with the following properties.  \begin{itemize} 
\item [(1)] 
The subalgebra $D$ may be identified (completely isometrically)
with a unital subalgebra $D$ of the $2 \times 2$ 
matrices $M_2$.  \item [(2)] 
There exists a  weak* continuous
completely contractive unital homomorphism $\Phi : A \to D \subset M_2$,
such that
 $\Phi$ has no contractive or positive
weak* continuous linear extension from $M$ to $M_2$ or to $D$.
\item [(3)] If $\Phi$ is viewed as a map
$A \to D \subset A$ then $\Phi$ is an idempotent $D$-bimodule map.
\end{itemize} 
\end{proposition}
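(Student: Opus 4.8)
The plan is to reproduce the obstruction of Example \ref{ex1} inside a \emph{non}-commutative finite von Neumann algebra, using the nilpotent (off‑diagonal) direction of $M_2$ to carry the non‑selfadjointness of $D$. I would take $M = M_2(L^\infty([0,1]))$, a finite von Neumann algebra with faithful normal tracial state $\tau(X) = \int_0^1 \tfrac12\,\mathrm{Tr}(X(t))\,dt$, let $t$ denote the coordinate function, and set
\[ A = \left\{ \begin{pmatrix} a & b + ct \\ 0 & a \end{pmatrix} : a,b,c \in \Cdb \right\}, \qquad D = \left\{ \begin{pmatrix} a & d \\ 0 & a \end{pmatrix} : a,d \in \Cdb \right\}, \]
the entries of $D$ being constant functions. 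Since the strictly upper triangular part squares to zero, $A$ is a three–dimensional commutative weak* closed subalgebra containing the unit, and $D\subset A$ is a copy of the dual numbers; because $D$ consists of constant matrices the inclusion $D\subset M$ is completely isometric onto the displayed subalgebra of $M_2$, which gives (1). I would then define $\Phi : A\to D$ by evaluating the off–diagonal entry at the \emph{endpoint} $t=1$, i.e.\ $\Phi\bigl(\begin{smallmatrix} a & b+ct \\ 0 & a\end{smallmatrix}\bigr) = \bigl(\begin{smallmatrix} a & b+c \\ 0 & a\end{smallmatrix}\bigr)$.

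The structural properties of $\Phi$ I expect to be routine. Every element of $A$ has affine, hence continuous, entries, so $A$ embeds completely isometrically in $M_2(C([0,1]))$, and there $\Phi$ is simply the restriction to $A$ of the point evaluation $\mathrm{ev}_1 : M_2(C([0,1]))\to M_2$. As $\mathrm{ev}_1$ is a unital $*$-homomorphism it is unital and completely contractive, so $\Phi$ is a unital completely contractive homomorphism, and weak* continuity is automatic since $A$ is finite dimensional. A direct check shows $\Phi|_D = \mathrm{id}_D$, so regarded as a map $A\to D\subset A$ it is an idempotent homomorphism onto $D$, hence (by commutativity, or by 4.2.9 in \cite{BLM}) a $D$-bimodule map, giving (3). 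Note that $\Phi$ does \emph{not} extend to $M$ as a $*$-homomorphism precisely because point evaluation is undefined on $L^\infty$; the whole point is that an Arveson–type ucp extension exists but cannot be taken normal.

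The heart of the matter, and the main obstacle, is (2). I would reduce all four cases to a single non‑extension statement. Let $s$ be the vector state on $M_2$ at $\xi=\tfrac{1}{\sqrt2}(e_1+e_2)$ and put $\psi = s\circ\Phi$, a state on $A$ computing to $\psi\bigl(\begin{smallmatrix} a & b+ct \\ 0 & a\end{smallmatrix}\bigr) = a + \tfrac12(b+c)$. If $\Psi:M\to M_2$ were any weak* continuous extension of $\Phi$ that is positive or contractive (range in $D\subseteq M_2$ being a special case), then $s\circ\Psi$ would be a weak* continuous unital functional on $M$ of norm $\le 1$, hence a \emph{normal state}, restricting to $\psi$ on $A$. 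Thus it suffices to prove that $\psi$ has no normal state extension to $M$.

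Here Example \ref{ex1} reappears, and getting this chain right is the crux. A normal state on $M$ has the form $\omega(X) = \int_0^1 \mathrm{Tr}(R(t)X(t))\,dt$ with $R(t)=\bigl(\begin{smallmatrix} r_{11} & r_{12}\\ r_{21} & r_{22}\end{smallmatrix}\bigr)\ge 0$ a.e.\ and $\int_0^1 \mathrm{Tr}(R(t))\,dt=1$. Matching $\omega|_A=\psi$ forces $\int_0^1 r_{21}=\tfrac12$ and $\int_0^1 t\,r_{21}=\tfrac12$, hence $\int_0^1 (1-t)\,r_{21}(t)\,dt = 0$. Positivity of $R(t)$ gives $\abs{r_{21}(t)}\le\tfrac12\bigl(r_{11}(t)+r_{22}(t)\bigr)$, so $\int_0^1\abs{r_{21}}\le\tfrac12$; comparing with $\int_0^1 r_{21}=\tfrac12$ forces equality throughout, whence $r_{21}=r_{11}=r_{22}\ge 0$ a.e. (the density collapses to a rank‑one multiple of $\bigl(\begin{smallmatrix}1&1\\1&1\end{smallmatrix}\bigr)$, which is exactly the extremality of the pure state $s$ biting). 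Then $\int_0^1 (1-t)\,r_{21}(t)\,dt = 0$ with $r_{21}\ge 0$ and $1-t>0$ a.e.\ forces $r_{21}=0$ a.e., contradicting $\int_0^1 r_{21}=\tfrac12$. This is precisely the mechanism of Example \ref{ex1}, and it is essential that the off–diagonal evaluation be at the endpoint $t=1$: evaluation at an interior point $t_0$ would only demand that a positive density have mean $t_0$, which is achievable, and a normal extension would then exist.
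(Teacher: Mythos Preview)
Your argument is correct, and in fact your three–dimensional $A$ with equal diagonals and the dual–number $D$ is exactly the variant the paper mentions (in the sentence immediately after its proof) as the way to make $A$ and $D$ commutative; so on the construction side you and the paper agree.

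Where you genuinely diverge is in the non–extension step (2). The paper does not pass through a vector state. Instead, given a weak* continuous contractive (hence positive, since unital on a $C^*$-algebra) extension $R:M\to M_2$, it simply reads off the $1$--$2$ corner: $f\mapsto \bigl(R(fE_{12})\bigr)_{12}$ is a weak* continuous functional on $L^\infty([0,1])$ which is contractive (since $\|R(fE_{12})\|\le\|f\|_\infty$) and takes $1$ to $\varphi_1(1)=1$, hence is a normal state extending $\varphi_1$, contradicting Example~\ref{ex1} in one line. Your route---composing with the pure state $s$ at $\tfrac{1}{\sqrt 2}(e_1+e_2)$ and analysing the density $R(t)$ via the equality case of $|r_{21}|\le\sqrt{r_{11}r_{22}}\le\tfrac12(r_{11}+r_{22})$---also works, but it forces you to stay in $M_2(L^1)$ and run an extra AM--GM rigidity argument to collapse $R(t)$ onto a multiple of $\bigl(\begin{smallmatrix}1&1\\1&1\end{smallmatrix}\bigr)$ before you can invoke the $\int(1-t)r_{21}=0$ contradiction. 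The paper's corner extraction is shorter because it lands directly in the scalar situation of Example~\ref{ex1}; your approach is more self-contained and makes the role of the extremal state $s$ explicit.
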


\begin{proof}
Let $\varphi_1$ be the weak* continuous state on ${\mathcal S}
\subset L^\infty([0,1])$ in Example  \ref{ex1}.
Let $M = M_2(L^\infty([0,1]))$, the $W^*$-algebra of $2 \times 2$
matrices with entries in $L^\infty([0,1])$.
Let $A$ be the subalgebra
of $M_2(L^\infty([0,1]))$ consisting of upper triangular matrices
with scalars (constant functions) as the main diagonal entries and elements from
${\mathcal S}$ in the $1$-$2$ entry.  This $A$ is four dimensional.
It is also  weak* closed in $M$ since any finite dimensional subspace is
closed in any linear topology.  This is related to \cite[Lemma 2.7.7]{BLM}.
Define
$\Phi : A \to M_2$ to be the map that applies $\varphi_1$ in the $1$-$2$ entry,
and leaves other entries `unchanged'.  That is $\Phi$ is `evaluation' at $1$.
This is easily seen
to be a weak* continuous unital homomorphism.
 Also, $\Phi$ is (completely) contractive by \cite[Proposition 2.2.11]{BLM}.
Let $D$ be the range of $\Phi$, the upper triangular
$2 \times 2$
matrices.   We may also view $D \subset A$ by identifying an upper triangular
matrix with the same matrix in $A$, but with $1$-$2$ entry multiplied by the
monomial $t$.  Then $\Phi$ viewed as a map
$A \to D \subset A$ is an idempotent $D$-bimodule map.
  Suppose that $R : M \to M_2$ was
a weak* continuous contractive extension of $\Phi$.  Then  
$R$ is positive (since it is well known that contractive unital maps
on $C^*$-algebras are positive).   The restriction of $R$
to matrices that are only nonzero in their $1$-$2$ entry, 
followed by the
projection onto the $1$-$2$ entry, defines a weak* continuous contractive
functional $\psi$ on $L^\infty([0,1])$.    
 Finally, it is clear that
$\psi$ extends $\varphi_1$, contradicting the first paragraph of this section.  
This contradiction shows that our extension $R$ cannot be
positive or contractive.
\end{proof}

One may adjust $A$ in the proof above
to be three dimensional by taking the
main diagonal entries of matrices in $A$ to be equal.    We also remark that if one insists on bimodule map 
extensions then one may get counterexamples with $M$ finite dimensional 
(see e.g.\ \cite[Example 3.5]{Smi}).

We also remark that very strong forms
of the noncommutative Hoffman-Rossi theorem
hold for Arveson's maximal subdiagonal subalgebras  of 
$\sigma$-finite von Neumann algebras \cite{BLueda}
(or more generally 
for algebras having some  of the Gleason-Whitney properties GW1, GW2, GW
from the start of Section 4 in \cite{BL-FMR}, or their variants for states, 
and studied further in e.g.\ \cite[Section 5]{BLueda} in the $\sigma$-finite 
case).    See Section 3 of \cite{BLvv} for more on this.

\subsection*{Acknowledgment}    Noncommutative Hoffman-Rossi theorems were a project suggested (and
guided in its more technical parts, e.g.\ things involving von Neumann algebras) by the first author
for the second author's  M. S.\  thesis  \cite{LFthesis} supervised by the third author.  
The
present paper contains several advances made subsequent to that reference, including the main result. 
We also thank the referee for several comments.


\begin{thebibliography}{88}

\bibitem{Bla}  B. Blackadar, {\em Operator algebras. Theory of $C^*$-algebras and von Neumann algebras,} Encyclopaedia of Mathematical Sciences, 122. Operator Algebras and Non-commutative Geometry, III. Springer-Verlag, Berlin, 2006. 

\bibitem{BL-FMR}  D. P. Blecher and L. E. Labuschagne, {\em  Noncommutative function theory and unique extensions},
 Studia Math. {\bf 178} (2007), 177-195.

\bibitem{BLueda}  D. P. Blecher and L. E. Labuschagne, {\em
 Ueda's peak set theorem for general von Neumann algebras,}  Trans.\ Amer.\ Math.\ Soc.\  
 {\bf 370} (2018), 8215--8236.

\bibitem{BLvv}  D. P. Blecher and L. E. Labuschagne, {\em On vector-valued characters for noncommutative function algebras,} preprint (2019).

\bibitem{BLM}  D. P. Blecher and
C.  Le Merdy, {\em Operator algebras and their modules---an
operator space approach,} Oxford Univ.\  Press, Oxford (2004).

\bibitem{Cas}  G. Cassier, {\em Alg\'ebres duales uniformes d'op\'erateurs sur l'espace de Hilbert,}
 Studia Math. {\bf 95} (1989), 17--32. 

\bibitem{LFthesis}  L. C. Flores, {\em Non-commutative weak*-continuous operator extensions,}
M.S. thesis, Texas A\& M University--Corpus Christi, May 2018.  

\bibitem{Gam}  T. W. Gamelin, {\em Uniform Algebras,} Second edition, Chelsea, New
York, 1984.
 

\bibitem{HR}  K. Hoffman and H. Rossi, {\em
Extensions of positive 
%z
weak*-continuous functionals,}
Duke Math. J. {\bf 34} (1967), 453--466.

\bibitem{Kon} H. K\"onig, {\em Zur abstrakten theorie der analytischen funktionen,}  Math.\  Z.\ {\bf 88} (1965), 136--165.

\bibitem{Smi}  R. R.  Smith, {\em Completely bounded module maps and the Haagerup tensor product,}  J. Funct. Anal. {\bf 102} (1991),  156--175. 


\bibitem{Tak}   M. Takesaki, {\em Conditional expectations in von Neumann algebras,} J. Functional Analysis {\bf 9} (1972), 306--321.

\end{thebibliography}
\end{document}